\documentclass[12pt]{amsart}
\usepackage[english]{babel}
\usepackage{amsmath}
\usepackage{amsthm}
\usepackage{amsfonts} 
\usepackage{hyperref}
\usepackage{epsfig}
\usepackage{xcolor}

\addtolength{\textwidth}{1.2cm}
\hoffset=-0.5cm

\numberwithin{equation}{section}

\allowdisplaybreaks

\newtheorem{theorem}{Theorem}
\newtheorem{proposition}{Proposition}[section]

\renewcommand{\epsilon}{\varepsilon}

\newcommand{\1}[1]{{\mathbf 1}{\{#1\}}}
\newcommand{\R}{\mathbb{R}}

\newcommand{\Z}{\mathbb{Z}}
 
\newcommand{\PR}{\mathbb{P}}
\newcommand{\Pb}{\overline{\mathbb{P}}}

\newcommand{\ES}{\mathbb{E}}

\newcommand{\D}{\mathcal{D}}
\newcommand{\F}{\mathcal{F}}

\title{Random Memory Walk}
\date{}
\author[A.~Fribergh]{Alexander Fribergh}
\address{Universit\'e de Montr\'eal, Department of Mathematics and Statistics}
\email{fribergh@dms.umontreal.ca}
\author[D.~Kious]{Daniel Kious}
\address{University of Bath, Department of Mathematical Sciences}
\email{d.kious@bath.ac.uk}
\author[V.~Sidoravicius]{Vladas Sidoravicius}
\author[A.~Stauffer]{Alexandre Stauffer}
\address{Universit\`a Roma Tre, Dipartimento di Matematica e Fisica; University of Bath, Department of Mathematical Sciences, supported by EPSRC Fellowship EP/N004566/1.}
\email{astauffer@mat.uniroma3.it}

%\keywords{Once-reinforced random walk, recurrence, transience}

\begin{document}

\begin{abstract}
   We present a simple model of a random walk with partial memory, which we call the \emph{random memory walk}. We introduce this model motivated by the belief that it mimics the behavior of
   the once-reinforced random walk in high dimensions and with small reinforcement. We establish the transience of the random memory walk in dimensions three and higher, and show that its scaling limit is a Brownian motion.
\end{abstract}

\maketitle
\begin{center}
\small{\it To the memory of Vladas Sidoravicius}
\end{center}
\section{Introduction}
This short paper started in discussions between the authors during a visit to NYU Shanghai. 
The model we study here, which we call the \emph{random memory walk}, was suggested by Vladas as a way to interpolate between the more well understood case of a random walk with bounded memory (similar to the so-called
\emph{senile random walk} \cite{holmes1,holmes2}) and the challenging model of once-reinforced random walk, which Vladas was fascinated about. 
In this paper we will discuss the behavior of the random memory walk. It turned out that the analysis of this model is quite simple once one looks at it from the right point of view. 

We start this paper by explaining the once-reinforced random walk, some related models,  and the main questions in this area, which motivated us (and, in particular, Vladas) to 
look at this model. Then we explain the link between the once reinforced random walk and the random memory walk, and proceed to the analysis of the latter model.

\subsection{Once-reinforced random walk (ORRW)}
This is one of such models whose definition is very simple but whose analysis is far from trivial. 
In fact, despite being introduced about three decades ago, the behavior of the ORRW on $\mathbb{Z}^d$ is still not well understood, even at an intuitive level, 
and there are essentially no rigorous result about it.

% % We will here define the model, present the open problems about it and the few available results. We will also mention related models that have been studied, or would be interesting to study.\\

We start defining the ORRW.
Consider an infinite, locally finite graph $G=(V,E)$ with vertex set $V$ and (non-oriented) edge set $E$, and with a distinguished vertex, called the \emph{origin}, that we denote $0$. 
Given a reinforcement parameter $\delta>0$, the ORRW $(X_n)_{n\ge0}$ is defined by the following dynamics. 
Start at time $0$ by placing the random walk at the origin (i.e., $X_0=0$) and by assigning weight $1$ to every edge of $E$. 
Then, at time $n\ge 1$, the random walk jumps to one of its neighbors with a probability proportional to the weight of each edge between them. 
Note that the first jump of the random walk is to a neighbor of $0$ chosen uniformly at random. 
Whenever the walk jumps across an edge $e$ for the first time, the weight of $e$ is updated from $1$ to $1+\delta$, and then the weight of $e$ is never updated again from that time onwards.

More formally, let  $E_n$ be the collection of  edges crossed by the random walk up to time $n$,  that is
\begin{align}\label{defEn}
   E_n:=\left\{e\in E: \exists  k\in \{1,\dots,n\} \text{ s.t.~}\{X_{k-1},X_k\}=e\right\}.
\end{align}
%For a vertex $v\in V$, set $v_0 = v^{-1}$ and denote the the offspring of $v$ by $v_i$, $i \in [\c(v)]$.
%At time $n\in\mathbb{N}$, we have that
%\[
%\bP\left(\left.X_{n+1}= v_i \right|\mathcal{F}_n, \{X_n=v\}\right)=\frac{\delta(v_i)\1_{(v_i)\in E_n}+w(v_i)\1_{(v_i)\notin E_n}}{\sum_{k=0}^{\c(v)} \left(\delta(v_k)\1_{(v_k)\in E_n}+w(v_k)\1_{(v_k)\notin E_n}\right)},
%\]
At time $n\in\mathbb{N}$ and on the event $\left\{X_n=x \right\}$ with $x\in V$,  the walk jumps  to a neighbor $y\sim x$ with conditional probability
\[
\PR\left(\left.X_{n+1}=y\right|\mathcal{F}_n\right)=\frac{1+\delta\1{\{x,y\}\in E_n}}{\sum_{z:z\sim x}1+\delta\1{\{x,y\}\in E_n}},
\]
where $\left(\mathcal{F}_n\right)$ is the filtration generated by the history of $(X_n)$, i.e.~$\mathcal{F}_n=\sigma(X_k,0\le k\le n)$ for any integer $n\ge0$.

This random walk thus favors edges that it has already crossed in the past (which, as usual, we call the \emph{range} of the walk), 
and $\delta$ regulates the strength with which the random walk favors its range. 
Intuitively, one could say that, as the random walk grows its range, it interacts with it by experiencing a {\it drift inwards} whenever it tries to move out of its range.
In other words, the random walk is attracted to traverse edges that it has already traversed in the past, creating some sort of a small trap for the walk. 

\subsection{Expected behavior of ORRW on $\Z^d$}
It is particularly interesting to study the ORRW on $\mathbb{Z}^d$, $d\ge 2$, where interesting conjectures have been made. 
The ORRW was introduced by Davis \cite{Davis} in 1990 as a simplification of the linearly edge-reinforced random walk, 
which was defined by Coppersmith and Diaconis in the late eighties. 
Coppersmith and Diaconis conjectured that the linearly edge-reinforced random walk undergoes a phase transition between recurrence and transience, but this was only established
about 25 years later in a sequence of papers \cite{ST,DST,ACK,SZ}. 

When defining the ORRW, Davis expected that its analysis should be easier than for the linearly edge-reinforced random walk, but curiously 
the question regarding recurrence and transience remains completely open for the ORRW. 
Davis noticed in his paper that ORRW has a trivial behavior in dimension one, and conjectured that 
it is recurrent in dimension two. 

It turns out that the ORRW is quite challenging to analyze due to the nature of its interaction and to the lack of monotonicity. Indeed, 
the {\it drift inwards} that we mentioned above means that, when the random walk is on the boundary of its range, 
it is slightly more likely that it goes back inside its range, a fact that could trigger us to think about recurrence. However, the range of the random walk at that place could be of a form such that 
the \emph{drift inwards} translates to a drift \emph{away from $0$}.

Extremely interesting conjectures have been made about the behavior of the ORRW on $\mathbb{Z}^d$, $d\ge3$, which are usually attributed to Vladas Sidoravicius and Vincent Beffara, and independently to Mike Keane. 
They conjectured that on $\mathbb{Z}^d$, $d\ge3$, there exists a phase transition on the strength of the reinforcement parameter $\delta$. 
That is, there should exist a critical parameter $\delta_c$, a priori depending on the dimension, 
such that if $\delta<\delta_c$ then ORRW with parameter $\delta$ is transient, and if $\delta>\delta_c$ then ORRW is recurrent. 

One can then ask finer questions about the model, for instance, regarding the scaling limit of the random walk in the transient regime, or the size and the shape of the range of the random walk in the recurrent regime. 
All these questions are, of course, still very much open.

It seems particularly interesting to try to study the asymptotic shape of the range in the recurrent case. Simulation suggests that there is a certain {\it shape theorem}:
the range $E_n$ of the walk at time $n$, when properly scaled by some polynomial in $n$, seem to converge to a deterministic shape. 
Nothing has been proved in this direction, and we refer the reader to the nice survey by Gady Kozma~\cite{Kozsurvey} where some pictures from simulations are presented. 
% It is almost trivial that, if a shape theorem is satisfied, then the ORRW has to be recurrent. 

\subsection{Other models related to ORRW}
A very nice explanation for why the aforementioned shape theorem result is true was usually given by Vladas by referring to what he called the \emph{Glassy sphere model}. In this model, 
consider spheres of radius $n\ge 1$ simply put inside each other, like Matryoshka dolls. 
Then, start a random walk on $\Z^d$ from the origin which is reflected upon touching the smallest sphere. 
Once the random walk has touched the smallest sphere a number of times that is proportional to its size (i.e.~$n^{d-1}$ for the $n$-th sphere), 
the sphere is destroyed so that the random walk now gets reflected on the next sphere. 
It is straightforward to prove that the random walk in the glassy sphere model is recurrent in any dimensions. 

One could believe that the ORRW for large $\delta$ follows the same behaviour as the glassy sphere model. In fact, if one believes that the range of ORRW for large $\delta$ grows like a ball, 
then once the ORRW has visited all vertices in a ball of radius $n$, it will roughly visit all the edges on the boundary of this ball before going too far away; 
hence it will ``bump'' on the boundary of this ball a number of times that is comparable to the size of the boundary. 
It is not at all clear to us whether this picture really corresponds to the actual behavior of ORRW. Though simulations suggest that this is indeed the case, one cannot 
disregard that simulations may not be very conclusive for model with such strong self interactions.

Other caricature models have been considered in order to try to understand the ORRW. 
Here is another model which Vladas recurrently mentioned and which seems very interesting but very challenging to analyze (we are not sure who this model should be attributed to). 
Consider a semi-infinite cylinder $\left(\mathbb{Z}/N\mathbb{Z}\right)\times \{n:n\le N\}$. 
On every vertex at non-negative height, i.e.~ on $\left(\mathbb{Z}/N\mathbb{Z}\right)\times \{n:0\le n\le N\}$, put a so-called cookie. 
Then, start a random walk coming from $-\infty$. 
This random walk evolves like a simple random walk with the exception that, when it jumps on a vertex $(z,h)$ where there is a cookie, then it 
instantaneously jumps to the vertex $(z,h-1)$ just below it and the cookie disappears. 
It is clear that this random walk is recurrent as it is essentially one dimensional, 
but interesting questions can be asked about the shape created by the remaining cookies. 
Indeed, one can consider the interface between the area without cookies and the area with cookies. 
This interface is intended to provide a simplistic picture of the microscopic behavior of the ORRW close to the boundary of its range for very large $\delta$. 

Note that the interface looks like a function; if we clear the cookie at a given vertex, then all the cookies from vertices below it will be cleared as well by the definition of the dynamics. 
Several questions arise from this model. For instance, stop the random walk when it reaches for the first time the height $N$. Then, how many cookies are left? 
How does the interface look like at that time? What is the height of the lowest remaining cookie? 
It is believed that, when the random walk first reaches height $N$, almost all the cookies have been eaten, with only $o(N)$ cookies remaining. It is also believed that the fluctuations of the interface should be of order $N^{2/3}$. 
A more daring guess would be that the interface, when the random walk first reaches height $N$, is related to KPZ. 

Such questions also inspired Vladas to look at random walk on growing domains. In this case, there is a growing sequence of subsets of $\Z^d$ called \emph{domains} and denoted by $D_0\subset D_1\subset D_2\subset \cdots$,
and a deterministic sequence of times $t_1 < t_2 < \cdots$ 
such that at a time $t\in [t_{i},t_{i+1})$ the random walk jumps according to a simple random walk that is confined to be inside $D_{i}$ (that is, the random walk is reflected at the boundary of $D_i$). So the sequence 
$t_1,t_2,\ldots$ gives the times at which the domain of the random walk grows. This model was studied by Vladas and others in \cite{DHS,DHS2}, and we refer the reader to \cite{Huang} for more recent results.

\subsection{ORRW in other graphs}
We conclude this section by mentioning 
interesting results that have been proved about ORRW in graphs that are not $\Z^d$. 
Indeed, it is interesting to ask whether the phase transition between recurrence and transience can be observed on {\it some} graph.

The ORRW on ladders has been studied, i.e.~on $\mathbb{Z}\times\{1,\dots,k\}$ with $k\ge2$. 
In this case, the ORRW should clearly be recurrent for all values of the parameter $\delta$. 
First, Sellke \cite{Sellke} proved that ORRW is recurrent for $k=2$, and showed that ORRW is recurrent for any $k\ge2$ as long as $\delta$ is small enough. 
Then, Vervoort \cite{Vervoort} wrote a draft paper giving an incomplete proof of recurrence for large reinforcement parameter, which despite having 
some gaps and mistakes, contained a very good core idea. This argument was later on cleaned and completed in \cite{KSS}.

The ORRW has also been analyzed on trees. 
The first result in that direction is the proof of transience on the binary tree in \cite{DKL} for any value of the parameter $\delta$, 
which shows that there is no phase transition on binary trees unlike the conjectured behavior on $\Z^d$. 
The lack of a phase transition has also been established on Galton-Watson trees by Collevecchio \cite{Coll}, who found a very elegant proof through a comparison to a branching process. 
In the hope of observing a phase transition, Kious and Sidoravicius \cite{KS} considered the ORRW on a particular family of trees, which grows only polynomially fast,
and were able to prove the existence of a phase transition on such trees. 
Later, it was proved in \cite{CKS} that the critical parameter $\delta_c$ of the ORRW of any tree is equal to the a quantity that was called the {\it branching-ruin number} of the tree. 
This quantity characterizes the size of the tree at the polynomial scale.

\section{Random memory walk}
Our motivation to study the random memory walk is to compare it to ORRW in high dimensions and with small reinforcement parameter. 
The rough idea is to say that, if the ORRW is transient and if the dimension is large enough, then the loops produced by the range of the ORRW should not be too large, 
and thus the random walk should not get to revisit its range a large number of times. Consequently, the ORRW would behave as if it had a finite random memory (given by the size of the local loops it produces).

We have no intention to argue that the random memory walk has the same behavior as  the ORRW in high dimension; in particular, as we will see in the definition below, the random memory walk 
has a memory that is independent of the range of the walk, which is certainly not the case for the ORRW. 
Nonetheless, one may ask the question of whether the ORRW in high dimensions and for small reinforcement parameter 
shows a similar regenerative structure as the random memory walk studied in the present paper.

Now we define the random memory walk.
As before, we denote the random walk by $(X_n)_{n\ge 0}$ starting from $X_0=0$. 
Let us denote by $R_{n,m}$ the last $m$ edges visited by the random walk at time $n$; that is, 
\begin{align*}
R_{n,m}=\left\{\{x,y\} \colon \ \exists i\in\{n-m+1,\dots, n\} \text{ s.t.~}\{X_{i-1},X_i\}=\{x,y\} \right\},
\end{align*}
with the convention that $R_{n,0}=\emptyset$.
In order to decide its position at time $n+1$, the random memory walk will have access to a memory of random length regarding its past. The length of this memory is given by the random variable
$K_{n}$, where $K_0,K_1,\ldots$ will form an i.i.d.~sequence of nonnegative random variables. 
Then the distribution of the location of the random memory walk at time $n+1$ will depend only on the current location of the walk ($X_{n}$) and on the information (the memory) 
regarding its $K_n$ last positions which is given by $R_{n,K_n}$.

More precisely, define the filtration $\F_n=\sigma((X_i,K_i), i\le n)$, for any $n\ge0$. 
Assuming $X_n=x\in\mathbb{Z}^d$ and $y$ is a neighbor of $x$, i.e. $|x-y|=1$, the next step is distributed according to the following conditional probability:
\begin{equation}
\PR\left[\left.X_{n+1}=y\right|\F_n\right]=\frac{1+\delta\1{\{x,y\}\in R_{n,K_n}}}{\sum_{z:z\sim x} (1+\delta\1{\{x,z\}\in R_{n,K_n}})},
\label{eq:jump}
\end{equation}
where $\delta>0$ is the reinforcement parameter. In other words, the random memory walk defined above jumps like the ORRW but reinforcing only the last edges in the range, 
where the number of edges chosen to be reinforced 
is a random variable that changes at each step and is given by the sequence $(K_n)_n$.

For the moment we will assume that 
\[
\PR[K_0=0]=p_0>0.
\]
The above assumption is not at all essential for the proof and is made here just to simplify the exposition.
Later in Section \ref{extension}, we explain how our proof can be adapted to remove the above assumption. In that section we also discuss a more general version of this model, where the probability of 
jump of the random walk is not given by~\eqref{eq:jump} but is a more general function of $R_{n,K_n}$.

\section{Our results}\label{results}

We are now ready to state our two main theorems. Our first theorem established transience in dimensions at least 3.

\begin{theorem}\label{mainth1}
   Assume that $\mathbb{E}(K_0)<\infty$. Then, the random memory walk $(X_n)_n$ on $\mathbb{Z}^d$, $d\ge3$, is transient almost surely.
\end{theorem}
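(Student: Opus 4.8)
\medskip
\noindent\textit{Outline of the proof.} The plan is to uncover a renewal-type structure driven by the i.i.d.\ memory lengths and then, observing the walk along these renewal times, to compare it with simple random walk through a characteristic-function estimate. Encode the dynamics by an i.i.d.\ sequence $(U_n)_{n\ge0}$ of ``jump variables'' independent of $(K_n)_{n\ge0}$, so that $X_{n+1}$ is a fixed measurable function of $X_n$, of the memory set $R_{n,K_n}$ and of $U_n$. Call $n$ a \emph{cut time} if $K_{n+j}\le j$ for every $j\ge0$; this forces $K_n=0$ (so the step out of a cut time is uniform on the $2d$ neighbours) and, more importantly, forces the window of edges consulted at every later time $m\ge n$ to lie in $\{n+1,\dots,m\}$, hence never to reach back before $n$. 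Consequently, conditionally on the whole sequence $\mathbf K=(K_n)_{n\ge0}$---under which the cut times $\tau_1<\tau_2<\cdots$ become deterministic---the increment process $\big(X_{\tau_k+m}-X_{\tau_k}\big)_{m\ge0}$ is a function of $(K_m)_{m\ge\tau_k}$ and of the jump variables of index $\ge\tau_k$ alone; so, given $\mathbf K$, the successive blocks $\big(X_{\tau_k+m}-X_{\tau_k}\big)_{0\le m\le\tau_{k+1}-\tau_k}$ use disjoint batches of jump variables and are therefore independent, and each is independent of $X_{\tau_k}$.

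\medskip
\noindent I would first check that cut times are almost surely infinite and of positive density. By independence, $\mathbb{P}[\,n\text{ is a cut time}\,]=\mathbb{P}[K_0=0]\prod_{j\ge1}\mathbb{P}[K_0\le j]=:q$, and $\prod_{j\ge1}\mathbb{P}[K_0\le j]>0$ precisely when $\sum_{j\ge1}\mathbb{P}[K_0>j]<\infty$, i.e.\ when $\mathbb{E}[K_0]<\infty$---the only use of the hypothesis. The event $\{n\text{ is a cut time for infinitely many }n\}$ lies in the tail $\sigma$-field of $\mathbf K$ and has probability at least $q>0$, hence equals $1$ by Kolmogorov's $0$--$1$ law; and since the sequence of cut-time indicators is stationary and ergodic, the ergodic theorem gives $k(n):=\#\{\text{cut times in }\{0,\dots,n\}\}\sim qn$ almost surely. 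The same argument applied to the event ``$n$ and $n+1$ are both cut times'', which has probability $\mathbb{P}[K_0=0]\cdot q>0$, shows that a positive fraction of the blocks have length one, i.e.\ reduce to a single uniform nearest-neighbour step.

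\medskip
\noindent The heart of the matter is a local-limit bound carried out conditionally on $\mathbf K$: for almost every $\mathbf K$, $\max_{z\in\mathbb{Z}^d}\mathbb{P}\big[X_{\tau_k}=z\mid\mathbf K\big]\le C\,k^{-d/2}$ for all large $k$, with $C$ a deterministic constant. Since $X_{\tau_k}$ is a sum of independent increments, its characteristic function is a product, and one bounds it factor by factor by $1$ except on the length-one blocks, whose increment is an explicit uniform step with characteristic function $\tfrac1d\sum_{i=1}^d\cos\theta_i$. This function has modulus $1$ only at $\theta=0$ and $\theta=(\pi,\dots,\pi)$---at the other corners of $[-\pi,\pi]^d$ it equals $(d-2\ell)/d$ with $1\le\ell\le d-1$, of modulus $<1$---so it is $\le1-c|\theta|^2$ near $0$, $\le1-c\,|\theta-(\pi,\dots,\pi)|^2$ near $(\pi,\dots,\pi)$, and $\le1-c$ elsewhere; raising this to the number of length-one blocks among the first $k$, which is $\ge c_1 k$ for large $k$, and integrating over $[-\pi,\pi]^d$ yields the bound $C\,k^{-d/2}$, the longer blocks only making the integrand smaller. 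Transience then follows: for $n$ in the block of index $k(n)$ the partial displacement $X_n-X_{\tau_{k(n)}}$ is, given $\mathbf K$, independent of $X_{\tau_{k(n)}}$, so $\mathbb{P}[X_n=w\mid\mathbf K]\le\max_z\mathbb{P}[X_{\tau_{k(n)}}=z\mid\mathbf K]\le C\,k(n)^{-d/2}\le C'n^{-d/2}$ for large $n$ and every $w\in\mathbb{Z}^d$, using $k(n)\sim qn$. Because $d\ge3$ this is summable in $n$, so the conditional Borel--Cantelli lemma gives $\mathbb{P}[X_n=w\text{ for infinitely many }n\mid\mathbf K]=0$ for a.e.\ $\mathbf K$ and every $w$; integrating over $\mathbf K$ shows that the walk visits every vertex finitely often almost surely, which is transience.

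\medskip
\noindent The hardest part is the uniform local-limit estimate. The increments of the observed walk are neither bounded---the block lengths inherit the (possibly very heavy) tails of $K_0$---nor identically distributed, so no off-the-shelf local limit theorem applies, and one must extract the quadratic decay of the characteristic function near $0$ and $(\pi,\dots,\pi)$ from the length-one blocks alone while keeping the constants uniform in $\mathbf K$; the dimension enters only at the end, through the summability of $\sum_n n^{-d/2}$. A secondary point, which is exactly what forces the whole argument to be conditioned on $\mathbf K$, is that the cut times are not stopping times---their definition looks infinitely far into the future---so a strong Markov property is not available directly; conditioning on the i.i.d.\ sequence $\mathbf K$ turns them into deterministic indices and makes the independence of the blocks transparent.
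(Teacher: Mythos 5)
Your proof is correct, and it is built on the same combinatorial object as the paper's --- the cut times $n$ with $K_{n+j}\le j$ for all $j\ge 0$, whose positive density hinges on the convergence of $\prod_{j}\mathbb{P}[K_0\le j]$, i.e.\ on $\mathbb{E}(K_0)<\infty$, exactly as in the paper --- but the way you extract transience from them is genuinely different. The paper works under the conditioned measure $\mathbb{P}[\,\cdot\mid\mathcal{D}_0]$, invokes the Sznitman--Zerner scheme to get i.i.d.\ regeneration blocks, deduces transience of the embedded walk $Y_k=X_{\tau_k}$ from Spitzer's criterion for genuinely $d$-dimensional random walks, and then needs a separate argument (the step the paper flags as requiring work) to rule out infinitely many returns to $0$ strictly \emph{between} regeneration times: each return to $0$ is a regeneration time with probability at least $\mathbb{P}[\mathcal{D}_0]>0$, so infinitely many returns would force $Y$ to revisit $0$ infinitely often. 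You instead condition on the entire sequence $\mathbf{K}$, which turns the cut times into deterministic indices, makes the blocks conditionally independent, and lets you prove the quantitative bound $\sup_w\mathbb{P}[X_n=w\mid\mathbf{K}]\le C\,n^{-d/2}$ via a characteristic-function estimate that uses only the positive density of length-one blocks (each contributing a factor $\frac1d\sum_i\cos\theta_i$, with the required quadratic decay at $\theta=0$ and $\theta=(\pi,\dots,\pi)$); conditional Borel--Cantelli then yields transience in one stroke, with no need for Spitzer's theorem or for the between-regenerations argument. Your route buys a heat-kernel-type upper bound and a transparent way around the fact that cut times are not stopping times; its cost is the explicit local-limit integral estimate, which you correctly identify as the delicate point and carry out soundly. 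Both arguments use only $\mathbb{E}(K_0)<\infty$ together with the standing assumption $\mathbb{P}[K_0=0]>0$, and both localize the role of $d\ge3$ at the very end.
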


In our second result, we establish the scaling limit of the random memory walk under stronger assumptions.

\begin{theorem}\label{mainth2}
If $\mathbb{E}(K_0^3)<\infty$, then $(X_n)_n$ satisfies a functional central limit theorem, that is, for any $T>0$,
\[
\left( \frac{X_{\lfloor nt\rfloor}}{\sqrt{n}}\right)_{t\in [0,T]}\Rightarrow \left(B_t\right)_{t\in[0,T]},
\]
where the convergence holds in law, and where $(B_t)_t$ is a non-degenerate $d$-dimensional Brownian motion.
\end{theorem}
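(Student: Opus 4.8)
The plan is to exploit the regenerative structure already established in the proof of Theorem~\ref{mainth1}. Since $\PR[K_0=0]=p_0>0$, whenever $K_n=0$ the walk at time $n$ takes a uniform simple-random-walk step independent of the past. The key observation is that one can construct \emph{regeneration times} $0=\tau_0<\tau_1<\tau_2<\cdots$ at which the future of the walk, recentred at $X_{\tau_i}$, becomes independent of the past and has a fixed law; the increments $(X_{\tau_{i+1}}-X_{\tau_i},\tau_{i+1}-\tau_i)_{i\ge1}$ form an i.i.d.\ sequence. The natural way to define a regeneration time here is to require that at some time $n$ the memory is ``flushed'': concretely, that $K_{n-1}=0$ (so the step into time $n$ was a free SRW step) and, more importantly, that from time $n$ onward the walk never again consults a portion of its memory that reaches back before time $n$ --- i.e.\ for all $m\ge n$ we have $K_m\le m-n$. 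Because $\ES(K_0)<\infty$, such times have positive density, which is essentially what was shown en route to transience; the quantitative input we will need is a tail bound on $\tau_1$ and on the inter-regeneration gaps.

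First I would set up the regeneration construction rigorously, following the standard scheme (as in Sznitman--Zerner for random walk in random environment, adapted to this much simpler i.i.d.\ memory setting): define a sequence of candidate times, at each one attempt the event ``memory never reaches back past this time again,'' and use a Borel--Cantelli / renewal argument together with $\PR[K_0=0]>0$ to show the construction succeeds and produces infinitely many regeneration times almost surely. The symmetry of the dynamics under the lattice reflections and rotations (the jump law \eqref{eq:jump} is invariant under any isometry of $\Z^d$ fixing the current vertex) forces $\ES[X_{\tau_{2}}-X_{\tau_1}]=0$, so the i.i.d.\ displacement increments are centred. Next I would establish the moment bounds: I claim $\ES[(\tau_{2}-\tau_1)^3]<\infty$ and $\ES[|X_{\tau_2}-X_{\tau_1}|^{3}]<\infty$ under the hypothesis $\ES(K_0^3)<\infty$. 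The displacement between consecutive regenerations is crudely bounded by the time elapsed, so the second bound follows from the first, and the first reduces to controlling how long one must wait until the running memory requirement $K_m\le m-n$ stops being violated --- a one-dimensional renewal-type estimate whose tail is governed by $\PR[K_0>k]$, and $\ES(K_0^3)<\infty$ is exactly the moment that transfers to a third moment on the gap (this is where the precise hypothesis of the theorem is used).

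With centred i.i.d.\ increments having finite second (indeed third) moments, the functional CLT for $(X_{\tau_i})_i$ along the regeneration skeleton is the classical Donsker theorem; it yields a Brownian motion with covariance matrix $\Sigma = \ES[X_{\tau_2}-X_{\tau_1}]^{\otimes 2}/\ES[\tau_2-\tau_1]$, which by the lattice symmetry is a positive multiple of the identity, hence non-degenerate. The remaining work is to pass from the skeleton $(X_{\tau_{\lfloor ns\rfloor}})$ to the full process $(X_{\lfloor nt\rfloor})$: one uses the law of large numbers $\tau_k/k\to\ES[\tau_2-\tau_1]$ to do the time change, and controls the oscillation of $X$ between consecutive regeneration times. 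The latter is where the third moment is genuinely needed rather than just convenient: a standard maximal-inequality argument shows $\max_{i\le k}\sup_{\tau_{i-1}\le n\le \tau_i}|X_n-X_{\tau_{i-1}}| = o(\sqrt{k})$ almost surely provided $\ES[(\tau_2-\tau_1)^{3/2}]<\infty$ (so that the gaps, and hence these excursion sizes, are $o(\sqrt k)$ after Borel--Cantelli), which in turn follows from $\ES(K_0^3)<\infty$. Combining the three pieces --- skeleton CLT, time change, negligible oscillation --- gives the stated convergence in the Skorokhod (indeed uniform) topology on $[0,T]$.

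The main obstacle I anticipate is the regeneration construction itself and the accompanying tail estimate for $\tau_1$: one must check carefully that the event defining a regeneration time --- ``the memory is flushed now and forever after'' --- has the right independence properties (the past before $\tau_i$ and the recentred future after $\tau_i$ decouple), and that it occurs with enough frequency. The subtlety is that the defining event involves the entire infinite future of the i.i.d.\ sequence $(K_m)_{m\ge n}$, so its probability is not simply bounded below; the correct statement is a renewal one (the running minimum of $m - \sup\{$ memory reach at time $m\}$ reaches a new record infinitely often), and quantifying the record-time tails in terms of $\ES(K_0^3)$ is the technical heart of the argument. Much of the structural part of this should, however, already be available from the proof of Theorem~\ref{mainth1}, so the incremental work is mostly the upgrade from ``positive density of good times'' to ``third moment on the gaps.''
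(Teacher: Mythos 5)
Your overall strategy is exactly the paper's: build the regeneration skeleton $(X_{\tau_k})_k$ with i.i.d.\ centred increments, apply Donsker to it, invert the time change via the law of large numbers for $\tau_k$, and show the oscillations between consecutive regeneration times are negligible at scale $\sqrt{n}$. However, your moment bookkeeping at the crucial step is wrong. You assert that $\ES(K_0^3)<\infty$ ``transfers to a third moment on the gap'' $\tau_2-\tau_1$. It does not: the transfer loses exactly one moment. Proposition~\ref{moments} of the paper shows that $\ES(\tau_1^m)<\infty$ \emph{if and only if} $\ES(K_0^{m+1})<\infty$ (the gap behaves like a geometric sum of copies of $S_1$ conditioned to be finite, and $\PR[S_1=k\mid S_1<\infty]\asymp\PR[K_0>k]$, so $\ES(S_1^m)<\infty$ requires $\sum_k k^m\PR[K_0>k]<\infty$, i.e.\ an $(m+1)$-st moment of $K_0$). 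Thus $\ES(K_0^3)<\infty$ yields only $\ES(\tau_1^2)<\infty$ under $\Pb$, and this one-moment loss is precisely \emph{why} the theorem assumes a third moment of $K_0$ rather than a second --- not, as you suggest, because the oscillation control demands more than the skeleton CLT does.

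Your localisation of where the extra moment is ``genuinely needed'' is also off in the other direction: $\ES[(\tau_2-\tau_1)^{3/2}]<\infty$ does \emph{not} give $\max_{i\le k}(\tau_i-\tau_{i-1})=o(\sqrt{k})$ almost surely (Borel--Cantelli on $\PR[\tau_i-\tau_{i-1}>\epsilon\sqrt{i}]$ requires summability of $i^{-p/2}\,\ES[(\tau_2-\tau_1)^p]$-type bounds, i.e.\ a moment of order $>2$, and the sharp a.s.\ statement is equivalent to a finite second moment). Fortunately this does not sink the argument: the second moment of the gap \emph{is} available under the hypothesis, and for the functional CLT one only needs the maximal excursion to be $o(\sqrt{k})$ in probability, which a finite second moment of $\tau_2-\tau_1$ delivers; the same finite second moment, via $|X_{\tau_2}-X_{\tau_1}|\le\tau_2-\tau_1$, gives square-integrable skeleton increments for Donsker. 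So the proof closes, but only after correcting the transfer of moments from $K_0$ to $\tau_1$; as stated, your claimed third moment of the gap is false and your claimed sufficiency of $3/2$ moments for the a.s.\ oscillation bound is also false.
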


It may seem surprising that we requite a finite third moment for the memory in the above result, instead of only a finite second moment. 
However, as we explain later in the paper, it seems that this is the best we can do with the techniques we use.

\section{Regeneration structure induced by the memory and transience of a sub-walk}
The main idea is to focus on the sequence $(K_n)_n$. 
We will define regeneration times, that is, times at which the random walk forgets its past and starts afresh. 
Once we are able to prove that such times happen infinitely often, we will be able to use classical arguments in order to prove Theorems~\ref{mainth1} and~\ref{mainth2}.

Define $\tau_0:=0$ and
\[
\tau_1=\inf\{n>0: K_{n+i}\le i,\ \forall i\ge0\}.
\]
Intuitively, if we consider time as the non-negative reals $\R_+$ and, for each integer $i\geq 0$, we draw a line segment between $i$ and $i-K_i$, 
then $\tau_1$ is the first position such that there is no 
line segment covering the edge $\{\tau_1-1,\tau_1\}$; see Figure~\ref{fig:K}. Note that when the random walk decides to jump from its location at $\tau_1$ to $\tau_1+1$, it does so as a step of simple random walk (that is,
it just chooses a neighbor of $X_{\tau_1}$ uniformly at random and jumps there), and from that time onwards it will not take into account anymore the edges it traversed before time $\tau_1$. Note also that $\tau_1$ necessarily happens at a time 
for which $K_{\tau_1}=0$; that is the reason why we consider the assumption that $\PR[K_0=0]>0$.
% 
% Recall that, at time $n$, the walk is allowed to use memory of length $K_n$. So, after time $\tau_1$, the walker will not use the information encapsulated in the first $\tau_1$ steps.\\
% Now, we need a criteria to know when $\tau_1$ is finite almost surely.
\begin{figure}[htbp]
   \begin{center}
      \includegraphics[scale=.7]{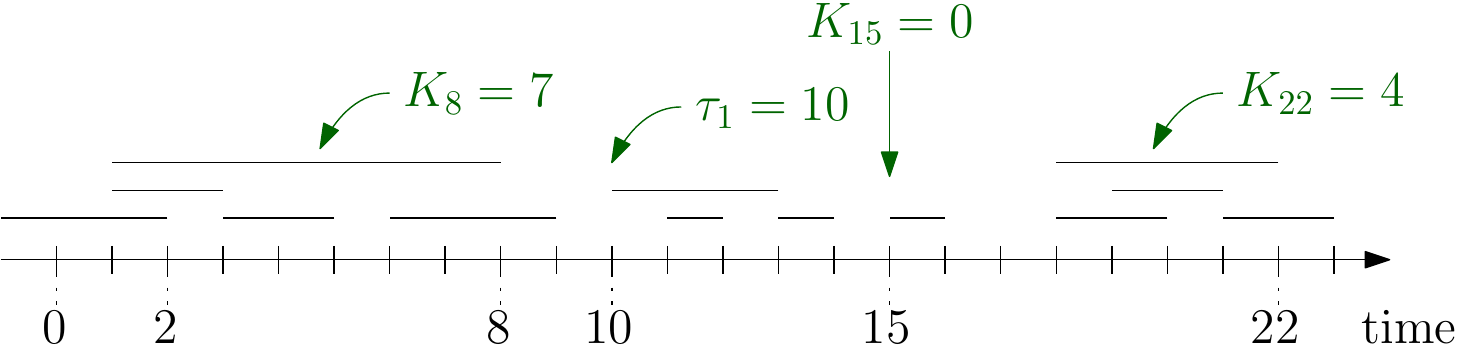}
   \end{center}%\vspace{-.5cm}
   \caption{Illustration of the regeneration time structure. The length of the horizontal line segment ending at coordinate $i$ represents the variable $K_i$ (which shows how far in the past the random walk needs to look 
   at to decide where to be at time $i+1$. Line segments are drawn at different heights for illustration purpose.}
   \label{fig:K}
\end{figure}

Now we show that $\tau_1$ is finite almost surely.
\begin{proposition} \label{moments}
We have that $\PR\left[\tau_1<\infty\right]=1$ if and only if $\mathbb{E}\left[K_0\right]<\infty$.
% First, we have that
% \[
% \PR\left[\tau_1<\infty\right]=1\Leftrightarrow\PR\left[\tau_1<\infty\right]>0\Leftrightarrow\PR\left[\tau_1=1\right]>0\Leftrightarrow \mathbb{E}\left[K_0\right]<\infty.
% \]
Moreover, for any integer $n\ge1$, we have that $\mathbb{E}(\tau_1^m)<\infty$ if and only if $\mathbb{E}(K_0^{m+1})<\infty$.
\end{proposition}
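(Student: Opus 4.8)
The plan is to read everything off the line-segment picture of Figure~\ref{fig:K}. Write $a_j:=\PR[K_0\ge j]$, so that $\sum_{j\ge 1}a_j=\ES[K_0]$ and, for an integer $m\ge 1$, $\ES[K_0^{m+1}]<\infty\iff\sum_{j\ge 1}j^m a_j<\infty$; write also $Q_u:=\prod_{j\ge u}\PR[K_0\le j]$, a non-decreasing sequence with $Q_0=\PR[\tau_1=1]$ and $Q_0>0\iff\ES[K_0]<\infty$. Call the half-open interval $(k-1,k)$, $k\ge 1$, \emph{covered} if it lies inside some segment $[i-K_i,i]$, i.e.\ if $K_i\ge i-k+1$ for some $i\ge k$; then $\tau_1=\inf\{k\ge 1:(k-1,k)\text{ not covered}\}$ and $\{\tau_1>n\}=\{(0,1),\dots,(n-1,n)\text{ all covered}\}$. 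For the first claim, if $\ES[K_0]=\infty$ then $\sum_i\PR[K_i\ge i]=\infty$, so by the second Borel--Cantelli lemma $K_i\ge i$ for infinitely many $i$ almost surely; on that event every interval is covered and $\tau_1=\infty$ a.s. If $\ES[K_0]<\infty$, the indicators $\xi_k:=\1{(k-1,k)\text{ covered}}$ form a stationary ergodic sequence — each $\xi_k$ is a fixed measurable function of the shifted i.i.d.\ sequence $(K_k,K_{k+1},\dots)$ — and the shift-invariant event $\{\xi_k=0\text{ infinitely often}\}$ has probability at least $\PR[\xi_1=0]=Q_0>0$, hence probability $1$, so $\tau_1<\infty$ a.s. From here on assume $\ES[K_0]<\infty$ and set $p_n:=\PR[\tau_1>n]$, so that for $m\ge 1$ one has $\ES[\tau_1^m]<\infty\iff\sum_{n\ge 1}n^{m-1}p_n<\infty$.

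For the lower bound on the moments (the implication $\ES[\tau_1^m]<\infty\Rightarrow\ES[K_0^{m+1}]<\infty$) I would use a single long segment: $\{\exists i\ge n:K_i\ge i\}\subseteq\{\tau_1>n\}$, since such a segment already contains $[0,n]$. Hence $p_n\ge 1-\prod_{i\ge n}(1-a_i)\ge(1-e^{-1})\sum_{i\ge n}a_i$ once $n$ is large enough that $\sum_{i\ge n}a_i\le 1$. Multiplying by $n^{m-1}$ and exchanging the order of summation, $\sum_n n^{m-1}p_n$ is bounded below, up to a finite correction, by $(1-e^{-1})\sum_i a_i\sum_{n\le i}n^{m-1}$, and $\sum_{n\le i}n^{m-1}\asymp i^m$; so $\ES[K_0^{m+1}]=\infty$ forces $\sum_n n^{m-1}p_n=\infty$ and thus $\ES[\tau_1^m]=\infty$.

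The substantive direction is $\ES[K_0^{m+1}]<\infty\Rightarrow\ES[\tau_1^m]<\infty$, and its heart is a \emph{sub-stochastic} renewal inequality for $p_n$. On $\{\tau_1>n\}$ look at the segments with right endpoint $\ge n$ that cover some interval of $[0,n]$, i.e.\ those $i\ge n$ with $K_i\ge i-n+1$; within $\{1,\dots,n\}$ their union is precisely a suffix $\{\ell_n,\dots,n\}$ of right-indices, where $\ell_n:=\min\max(1,i-K_i+1)$ over such $i$. If $\ell_n=1$ we are already done, and a short computation gives $\PR[\ell_n=1]=1-Q_{n-1}\le\sum_{j\ge n}a_j$. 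If $\ell_n=v\ge 2$, the right-indices $1,\dots,v-1$ still need to be covered, and — because no segment with right endpoint $\ge n$ can reach that far left once $\ell_n=v$ — only by segments with right endpoint $\le n-1$; that event depends on $(K_i)_{i\le n-1}$, hence is independent of $\{\ell_n=v\}$, which depends on $(K_i)_{i\ge n}$, and it is contained in $\{\tau_1>v-1\}$. Together with $\PR[\ell_n=v]=Q_{n-v+1}-Q_{n-v}$ (again a direct computation), summing over $v$ yields
\[
p_n\ \le\ (1-Q_{n-1})\ +\ \sum_{u=1}^{n-1}(Q_u-Q_{u-1})\,p_{n-u},
\]
whose kernel $c_u:=Q_u-Q_{u-1}=a_uQ_u\le a_u$ has total mass $\sum_{u\ge1}c_u=1-Q_0<1$. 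Iterating this inequality bounds $p_n$ by $\sum_{k\ge0}\sum_{u_1,\dots,u_k\ge1}\big(\prod_i c_{u_i}\big)b_{\,n-u_1-\cdots-u_k}$, where $b_m:=1-Q_{m-1}\le\sum_{j\ge m}a_j$ for $m\ge 1$ and $b_m=0$ for $m\le 0$. Multiplying by $n^{m-1}$, summing over $n$ and exchanging sums, the inner $n$-sum is $\sum_{l\ge1}(l+s)^{m-1}b_l\le C(1+s^{m-1})$ because $\sum_l b_l$ and $\sum_l l^{m-1}b_l$ are finite (they are $\asymp\ES[K_0^2]$ and $\asymp\ES[K_0^{m+1}]$ respectively); this leaves $\sum_n n^{m-1}p_n\le C\sum_{k\ge0}\sum_{u_1,\dots,u_k}\big(\prod_i c_{u_i}\big)\big(1+(\textstyle\sum_i u_i)^{m-1}\big)$, which is finite since the kernel has mass $1-Q_0<1$ while $\sum_u u^{m-1}c_u\le\sum_u u^{m-1}a_u\asymp\ES[K_0^m]<\infty$ (bound the $(m-1)$-st moment of $\sum_i u_i$ by Minkowski's inequality). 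Hence $\ES[\tau_1^m]<\infty$.

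I expect the renewal inequality of the previous paragraph to be the real obstacle. The naive estimate — a union bound over all ``chains of overlapping segments'' tiling $[0,n]$ — diverges as soon as $\ES[K_0]\ge 1$, so one genuinely has to exploit that a chain made of many short segments costs an exponentially small probability; this is exactly what forces the kernel $(c_u)$ to have total mass strictly below $1$. (Conceptually, $p_n$ is the probability that the Lindley-type Markov chain $U_{\ell+1}=\max(\kappa_{\ell+1},U_\ell-1)$, with $\kappa_\ell$ i.i.d.\ copies of $K_0$, in its stationary law stays positive for $n$ steps, and the computation above is the estimate of that chain's excursion-length tails.) Arranging the decomposition by $\ell_n$ so that it splits $\{\tau_1>n\}\cap\{\ell_n=v\}$ into truly independent pieces, and checking the telescoping identities $\PR[\ell_n=1]=1-Q_{n-1}$ and $\PR[\ell_n=v]=Q_{n-v+1}-Q_{n-v}$, is the step that has to be carried out with care.
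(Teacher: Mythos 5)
Your proof is correct, but it takes a genuinely different route from the paper's. The paper scans forward from time $1$: it sets $T_1=1$, lets $S_k$ be the first index whose memory reaches back past $T_k-1$, and puts $T_{k+1}=T_k+S_k+1$, so that the candidates $T_1,T_2,\dots$ form independent trials each succeeding with probability $\PR[\tau_1=1]=\prod_{i\ge0}\PR[K_0\le i]$. This yields the exact representation $\tau_1=N+\sum_{k<N}S_k$ with $N$ geometric and $\PR[S_1=k\mid S_1<\infty]\asymp\PR[K_0>k]$, from which both the zero--one dichotomy for $\{\tau_1<\infty\}$ and the moment equivalence $\ES[\tau_1^m]<\infty\iff\ES[K_0^{m+1}]<\infty$ drop out at once. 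You instead work directly with the tails $p_n=\PR[\tau_1>n]$: you get a.s.\ finiteness from Borel--Cantelli and the ergodicity of the coverage indicators (your ``probability at least $Q_0$'' step deserves the one-line justification via stationarity, $\PR[\exists k\ge n:\xi_k=0]=\PR[\exists k\ge1:\xi_k=0]\ge Q_0$ for every $n$), the lower moment bound from the single-long-segment inclusion, and the upper bound from a defective renewal inequality obtained by conditioning on the leftmost reach $\ell_n$ of the segments anchored at times $\ge n$; I checked the telescoping identities $\PR[\ell_n\le v]=1-Q_{n-v}$ and the independence/containment used to close the inequality, and they are right, as is the iteration with the subprobability kernel $c_u=a_uQ_u$ of mass $1-Q_0<1$ and the Minkowski bound on the $(m-1)$-st moment of the geometric convolution. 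What the paper's decomposition buys is brevity and an exact distributional identity for $\tau_1$ (which it reuses when discussing the regeneration structure); what yours buys is an explicit, quantitative tail bound on $p_n$ and a fully spelled-out treatment of the ``geometric sum of i.i.d.\ variables'' moment step that the paper only asserts. Your estimates are somewhat more involved than necessary, but there is no gap.
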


\begin{proof}
First, we note that if $\PR\left[\tau_1=1\right]=0$, then $\PR\left[\tau_1<\infty\right]=0$ as, for any $n>1$, we have
\begin{align*}
\PR\left[\tau_1=n\right]&=\PR\left[\forall 1\le k \le n-1, \exists j\ge 0 \text{ s.t.~}K_{k+j}>j, K_{n+i}\le i, \forall i\ge0\right]\\
&\le\PR\left[K_{n+i}\le i, \forall i\ge0\right]\\
&=\PR\left[K_{1+i}\le i, \forall i\ge0\right]\\
&=\PR\left[\tau_1=1\right]=0.
\end{align*}

Now, assume $\PR\left[\tau_1=1\right]>0$ and let us study the event $\{\tau_1<\infty\}$. This event can be seen as successive trials of realizing the events $\{K_{n+i}\le i, \forall i\ge0\}$, and these trials are independent and have probability $\PR\left[\tau_1=1\right]$. 
To prove that properly, let us define recursively
\begin{align*}
   T_1=1, \quad S_k=\inf\{i\ge0: K_{T_k+i}>i\}, \text{ and}\\
    T_{k+1}=T_{k}+S_k+1 \text{, for all }k\ge1.
\end{align*}
In words, $T_k+S_k$ is the first position after $T_k-1$ for which the memory of the walk at that time (equivalently, the line segment that ends there) goes back all the way to $T_k-1$. 
For example, in Figure~\ref{fig:K}, we have that $T_1+S_1=2$, and subsequently we get $T_2=3$ and $T_2+S_2=3$. 

The idea behind this definition is that if $\tau_1>1$, then we look for the value of $S_1$. This translates to checking the random variables $K_{T_1},K_{T_1+1},\ldots$ until finding the value of $S_1$. If we obtain that 
$S_1<\infty$, then position $T_1+S_1+1=T_2$ is a possible candidate for $\tau_1$. If it turns out that $\tau_1 > T_2$, then we look for $S_2$ and $T_3$. 
At each step of this procedure, say step $k\ge1$, 
we will show that, regardless of the values of $T_1,T_2,\ldots,T_{k}$ and regardless of the values of $S_1,S_2,\ldots,S_{k-1}$, with positive probability we have that $S_{k}=\infty$, which in turn gives that 
$\tau_1=T_{k+1}=T_k+S_k+1$.

More formally, define $N=\inf\{k\ge1: S_k=\infty\}$. Using these random variables, we have that $\{\tau_1<\infty\}=\{N<\infty\}$. Also, note that, $\PR[S_1=\infty]=\PR[\tau_1=1]$ and, for any $k>0$, conditional on $T_k$, $S_k$ is distributed like $S_1$.
Hence, one can write
\begin{align*}
\PR[\tau_1<\infty]&=\sum_{k=1}^\infty \PR[N=k]\\
&=\sum_{k=1}^\infty \PR\left[\bigcap_{i=1}^{k-1}\{S_i<\infty\}, S_k=\infty\right]\\
&=\sum_{k=1}^\infty  \PR\left[\tau_1=1\right]\left(1- \PR\left[\tau_1=1\right]\right)^{k-1}
=1.
\end{align*}

Hence, we have proved that if 
$\PR\left[\tau_1=1\right]>0$, then $\PR\left[\tau_1<\infty\right]=1$. 
Finally, we can conclude the first statement of the proposition by noting that
\begin{align*}
\PR\left[\tau_1=1\right]=\prod_{i=0}^{\infty}\PR[K_0\le i]=\prod_{i=0}^{\infty}(1-\PR[K_0> i])\sim ce^{-\mathbb{E}(K_0)},
\end{align*}
and therefore $\PR\left[\tau_1=1\right]>0$ if and only if $\mathbb{E}(K_0)<\infty$.

Now we turn to the second part of the proposition. For this purpose, note that, from the definition of $(T_k)_k$, $(S_k)_k$ and $N$, 
\[
\tau_1=T_N=N+\sum_{k=1}^{N-1}S_k,
\]
where we recall that $N$ is distributed as a geometric random variable with parameter $\PR\left[\tau_1=1\right]$, and where the random variables $S_k$ appearing in the sum above are conditioned to be finite. Hence, $\tau_1$ is essentially equal to a sum of a geometric number of independent random variables distributed like $S_1$ conditioned on $\{S_1<\infty\}$. Therefore, for any $m\ge1$, $\tau_1$ has an $m$-th moment if and only if $S_1$ conditioned on $\{S_1<\infty\}$ has an $m$-th moment. Now, for any $k\ge0$, one can write
\begin{align*}
\PR\left[\left.S_1=k\right|S_1<\infty\right]&=\frac{\PR\left[ K_{1+i}\le i\text{, for }0\le i \le k-1\text{, and } K_{1+k}>k \right]}{\PR[S_1<\infty]}\\
&=\frac{\prod_{i=0}^{k-1}\left(1-\PR\left[ K_{0}> i\right]\right)\times \PR\left[ K_{0}>k \right]}{\PR[S_1<\infty]}.
\end{align*}
Now, assume that $\mathbb{E}(K_0)<\infty$. In that case, as shown above, $\prod_{i=0}^{k-1}\left(1-\PR\left[ K_{0}> i\right]\right)$ converges to a positive constant. 
Besides, we have $\PR[S_1<\infty]=\PR[\tau_1=1]>0$. 
Thus, there exist constants $c_0$ and $c_1$ such that
\begin{align*}
c_0 \PR\left[ K_{0}>k \right]\le \PR\left[\left.S_1=k\right|S_1<\infty\right]\le c_1  \PR\left[ K_{0}>k \right].
\end{align*}
Thus, we have that, for any $m\ge1$,
\begin{align*}
c_0 \sum_{k=1}^\infty k^m \PR\left[ K_{0}>k \right]\le \mathbb{E}\left(\left.S_1^m\right|S_1<\infty\right)\le c_1 \sum_{k=1}^\infty k^m \PR\left[ K_{0}>k \right].
\end{align*}
From there, it is clear that $\mathbb{E}\left(\left.S_1^m\right|S_1<\infty\right)$ is finite if and only if $K_0$ has an $(m+1)$-th moment, which concludes the proof.
\end{proof}

The time $\tau_1>0$ is referred to as  the first \emph{regeneration time}. Let us denote 
\begin{equation}
   \D_n:=\{K_{n+i}\le i,\ \forall i\ge0\}
   \label{eq:dn}
\end{equation}
the event on which $n\ge0$ is a regeneration time.\\
By Proposition \ref{moments}, we have that if $\mathbb{E}(K_0)<\infty$ then $\PR[\D_1]>0$, which easily implies that $\PR[\D_0]>0$.
Therefore,  we can safely define the conditional probability $\Pb[\cdot]:=\PR[\cdot|\D_0]$ and we have that if $\ES(K_0)<\infty$ then
\[
\Pb[\tau_1<\infty]=\frac{\PR[\tau_1<\infty,\D_0]}{\PR[\D_0]}=1.
\]

Also, we have that
\begin{align}\nonumber
\Pb[\D_n]&=\frac{\PR[K_{i}\le i,\ \forall 0\le i\le n-1]\times \PR[K_{n+i}\le i,\ \forall i\ge0]}{\PR[K_{i}\le i,\ \forall i\ge0]}\\ \nonumber
&=\PR[K_{i}\le i,\ \forall 0\le i\le n-1]\\
&\ge \PR[\D_0]>0. \label{ineq}
\end{align}

We inductively define the sequence of regeneration times $\tau_n=\tau_{n-1}+\tau_1\circ\theta_{\tau_{n-1}}$, where $\theta$ is the canonical shift. The following proposition is a classical result on regeneration times.
\begin{proposition}\label{regeneration}
Assume that $\ES(K_0)<\infty$. The random variables  $(X_{\tau_{n}}-X_{\tau_{n-1}},\tau_n-\tau_{n-1})_{n\ge1}$ are independent and, except for $n=1$, are distributed like $(X_{\tau_1},\tau_1)$ under $\Pb$. In particular, all the regeneration times $\tau_n$, $n\ge1$, are finite $\PR$-almost surely.
\end{proposition}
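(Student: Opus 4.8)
The plan is to show that the sequence of regeneration times $(\tau_n)$ has the claimed i.i.d.\ increment structure by exploiting the key feature of a regeneration time $\tau_k$: at such a time the walk ``forgets'' everything before $\tau_k$. Concretely, on the event $\D_{\tau_k}$ (which holds by construction), for every $i\ge 0$ one has $K_{\tau_k+i}\le i$, which means the memory window $R_{\tau_k+i,K_{\tau_k+i}}$ used to decide the step from time $\tau_k+i$ to $\tau_k+i+1$ only involves edges traversed at times in $\{\tau_k+1,\dots,\tau_k+i\}$, i.e.\ strictly after $\tau_k$. Therefore the evolution of the walk after time $\tau_k$, together with the variables $(K_{\tau_k+i})_{i\ge 0}$, is a measurable function of $X_{\tau_k}$ and of the fresh randomness used from time $\tau_k$ onward, and is completely independent of $\mathcal F_{\tau_k}$ except through $X_{\tau_k}$. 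Moreover, by translation invariance of both the walk's jump rule~\eqref{eq:jump} and the i.i.d.\ sequence $(K_n)$, the law of this post-$\tau_k$ evolution does not depend on the value of $X_{\tau_k}$ at all once we re-center; so $(X_{\tau_k+n}-X_{\tau_k},K_{\tau_k+n})_{n\ge 0}$ has a law that is the same for every $k$ and is independent of the past.

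The first step I would carry out is to make precise the notion that $\tau_k$ is a stopping time with respect to a suitable filtration and that the events $\D_n$ are measurable; here one must be slightly careful because $\D_n=\{K_{n+i}\le i,\ \forall i\ge 0\}$ looks into the future of the $K$-sequence, so $\tau_1$ is \emph{not} a stopping time for $(\F_n)$. The standard remedy is to argue pathwise: the construction via $(T_k,S_k,N)$ in the proof of Proposition~\ref{moments} already expresses $\tau_1$ as a finite (a.s.) explicit functional of the $K$-sequence, and one checks that the pair $(\text{value of }\tau_1,\ \text{everything strictly before }\tau_1)$ is independent of $(\text{everything from }\tau_1\text{ on})$ once one conditions appropriately. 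The second step is to implement the standard regeneration decomposition: write, for the canonical shift $\theta$, $\tau_{n}=\tau_{n-1}+\tau_1\circ\theta_{\tau_{n-1}}$ and $X_{\tau_n}-X_{\tau_{n-1}}=(X_{\tau_1}-X_0)\circ\theta_{\tau_{n-1}}$, and then apply the strong-Markov-type property at $\tau_{n-1}$ together with the observation from the first paragraph that, on $\D_{\tau_{n-1}}$, the post-$\tau_{n-1}$ picture is a measurable image of $X_{\tau_{n-1}}$ and independent fresh noise. Peeling off one increment at a time and using induction gives that $(X_{\tau_n}-X_{\tau_{n-1}},\tau_n-\tau_{n-1})_{n\ge 1}$ are independent. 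The third step handles the distributional identification: for $n\ge 2$, the increment is governed by the conditional law given that $\tau_{n-1}$ was a regeneration time, i.e.\ given $\D_{\tau_{n-1}}$, which is exactly $\Pb[\cdot]=\PR[\cdot\mid\D_0]$ after translation; for $n=1$ this conditioning is absent (we start from $\PR$, not $\Pb$), which is why $n=1$ is exceptional. Finiteness of all $\tau_n$ under $\PR$ then follows since each increment $\tau_n-\tau_{n-1}$ for $n\ge 2$ is distributed as $\tau_1$ under $\Pb$, which is finite a.s.\ by Proposition~\ref{moments} and the displayed identity $\Pb[\tau_1<\infty]=1$, while $\tau_1$ itself is finite $\PR$-a.s.\ by the same proposition.

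The main obstacle I expect is the measurability/filtration bookkeeping in the first paragraph: because $\D_n$ depends on the infinite future of the $K$-sequence, one cannot literally invoke the strong Markov property at the stopping time $\tau_1$, and a naive ``condition on $\F_{\tau_1}$'' is wrong. The clean way around this is to condition on the larger $\sigma$-algebra generated by $\mathcal F_{\tau_1}$ \emph{together with} the information that was used to certify $\D_{\tau_1}$ (equivalently, to work with the explicit functional description of $\tau_1$ from Proposition~\ref{moments}'s proof and verify independence of the pre- and post-$\tau_1$ blocks directly, using that the $K_i$ are i.i.d.\ and that the walk's increments after $\tau_1$ depend only on $(K_{\tau_1+i})_{i\ge 0}$ and independent uniform choices). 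Once this independence-of-blocks statement is established, everything else is the routine iteration of a renewal-type argument, and the distributional claims and a.s.\ finiteness follow immediately; since the excerpt itself describes this as ``a classical result on regeneration times,'' I would present this bookkeeping carefully but briefly, citing the standard template (as in the analysis of regeneration times for random walks in random environment) and emphasizing only the point specific to this model, namely that $\D_n$ guarantees the memory window never reaches back past time $n$.
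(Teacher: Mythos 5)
Your proposal is correct and follows essentially the same route as the paper, which simply defers to the classical regeneration-time argument of Sznitman--Zerner (Corollary 1.5 in \cite{SznitZern}); your sketch is precisely the content of that template adapted to this model, including the key observations that $\D_{\tau_k}$ prevents the memory window from reaching back past $\tau_k$ and that one must enlarge the $\sigma$-algebra at $\tau_1$ because $\D_n$ looks into the future of the $K$-sequence. No gaps to flag.
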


\begin{proof}
This easily follows from general and classical arguments. For instance, one can replicate the proof of Corollary 1.5 in \cite{SznitZern}, which comes from Proposition 1.3 and Theorem 1.4 therein.
\end{proof}

Note that, from the above, we have that if $X$ is almost surely transient under $\Pb$ then it is almost surely transient under $\PR$ as $\liminf||X_n||\ge -||X_{\tau_1}||+\liminf||X_n-X_{\tau_1}||$.
Nevertheless, it is not obvious that $X$ satisfies a $0$-$1$ law for transience, even under $\Pb$.\\

In this section, we want first to prove transience and CLT for the walk $(X_n)$ considered at regeneration times. 
For this purpose, define the walk $(Y_k)_{k\ge0}$ on $\mathbb{Z}^d$ such that $Y_k=X_{\tau_k}$ for any $k\ge0$.
\begin{proposition}
   If $\ES(K_0)<\infty$, then the random walk $(Y_k)$ is transient under $\Pb$, and under $\PR$.
\end{proposition}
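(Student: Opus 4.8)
The plan is to realize $(Y_k)_{k\ge 0}$ as a random walk with i.i.d.\ increments (after the first step) and then invoke the classical fact that a mean-zero, nondegenerate random walk on $\Z^d$ with $d\ge 3$ and enough integrability is transient. By Proposition~\ref{regeneration}, under $\Pb$ the increments $(Y_k - Y_{k-1})_{k\ge 1} = (X_{\tau_k} - X_{\tau_{k-1}})_{k\ge 1}$ are independent, and for $k\ge 2$ they are distributed like $X_{\tau_1}$ under $\Pb$; the single exception at $k=1$ does not affect transience, since transience is a tail property of the trajectory. So it suffices to understand the law of $X_{\tau_1}$ under $\Pb$.

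First I would record the key structural observation: on the event $\D_0$, step $0\to 1$ of the walk is a uniform step of simple random walk, independent of everything (because $R_{0,K_0}=\emptyset$ when $K_0=0$, which is forced on $\D_0$). More generally, each regeneration increment $X_{\tau_k}-X_{\tau_{k-1}}$ decomposes into a first unbiased step followed by a path whose law is symmetric under the reflections of $\Z^d$ — indeed, the whole construction in~\eqref{eq:jump} is invariant under the hyperoctahedral group acting on $\Z^d$, and the regeneration times $\tau_k$ are defined purely in terms of the $(K_n)$ sequence, hence are independent of these spatial symmetries. This gives $\ES_{\Pb}[X_{\tau_1}]=0$ by symmetry. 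Nondegeneracy of the increment's covariance also follows from this symmetry together with the fact that $X_{\tau_1}\ne 0$ with positive probability (e.g.\ on $\{\tau_1=1\}\cap\D_0$, which has positive probability when $\ES(K_0)<\infty$, $X_1$ is a uniform neighbor of $0$), so the covariance matrix is a positive multiple of the identity.

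Next, I would address integrability of the increment. A single regeneration increment satisfies $\|X_{\tau_1}\| \le \tau_1$ deterministically, so $\ES_{\Pb}[\|X_{\tau_1}\|]<\infty$ provided $\ES_{\Pb}[\tau_1]<\infty$, which by Proposition~\ref{moments} holds under the assumption $\ES(K_0^2)<\infty$. Here is the main subtlety: the theorem we are proving (Theorem~\ref{mainth1}) only assumes $\ES(K_0)<\infty$, which a priori gives only $\ES_{\Pb}[\tau_1]=\infty$-possible, i.e.\ the increments need not even have a first moment. So the clean ``mean-zero finite-variance random walk'' argument does not directly apply. The way around this — and what I expect to be the crux of the proof — is that one does not need the increments to be integrable: a random walk on $\Z^d$, $d\ge 3$, with i.i.d.\ symmetric (hence mean-zero in the symmetric sense) increments that are genuinely $d$-dimensional is transient regardless of moment conditions. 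This can be seen via the characteristic function / Chung–Fuchs criterion: write $\phi$ for the characteristic function of the increment; symmetry makes $\phi$ real, and nondegeneracy gives $1-\phi(\theta)\ge c\|\theta\|^2$ for $\theta$ near $0$ (the increment has a positive-probability atom at a nearest neighbor of $0$, which already forces a quadratic lower bound on $1-\phi$ near the origin even without a second moment, because $1-\cos$ contributes a quadratic term from the bounded atomic part); then $\int_{[-\pi,\pi]^d} \mathrm{Re}\,\frac{1}{1-r\phi(\theta)}\,d\theta$ stays bounded as $r\uparrow 1$ since $\int_{\text{near }0}\|\theta\|^{-2}\,d\theta<\infty$ in dimension $d\ge 3$, which is the Chung–Fuchs transience criterion.

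So the steps, in order, are: (1) cite Proposition~\ref{regeneration} to reduce to i.i.d.\ increments with law $\mathcal{L}(X_{\tau_1}\mid\D_0)$, discarding the anomalous first increment; (2) use the lattice symmetry of~\eqref{eq:jump} and the shift-measurability of $\tau_1$ to conclude the increment law is symmetric under sign flips and permutations of coordinates; (3) use $\Pb[\tau_1=1]>0$ (from Proposition~\ref{moments}, valid since $\ES(K_0)<\infty$) to get a positive-probability nearest-neighbor atom, hence a quadratic lower bound $1-\phi(\theta)\ge c\|\theta\|^2$ near $0$ and genuine $d$-dimensionality; (4) apply the Chung–Fuchs criterion in $d\ge 3$ to conclude transience of $(Y_k)$ under $\Pb$, and then transfer to $\PR$ using the absolute continuity relation $\Pb=\PR[\,\cdot\mid\D_0]$ together with $\PR[\D_0]>0$ (alternatively, use that finiteness of all $\tau_n$ under $\PR$ from Proposition~\ref{regeneration} plus the displayed inequality $\liminf\|X_n\|\ge -\|X_{\tau_1}\|+\liminf\|X_n-X_{\tau_1}\|$ reduces $\PR$-transience to $\Pb$-transience, exactly as noted in the text). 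The main obstacle is genuinely step (3)–(4): being careful that one can prove transience without assuming the increments are integrable, and that the nearest-neighbor atom is enough to control $\phi$ near the origin.
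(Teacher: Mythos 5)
Your proposal is correct and follows essentially the same route as the paper: reduce via Proposition~\ref{regeneration} to a walk with i.i.d.\ increments under $\Pb$, use the lattice symmetry of the model to get a symmetric, genuinely $d$-dimensional increment law, conclude transience in $d\ge 3$, and transfer from $\Pb$ to $\PR$ exactly as in the text. The only difference is that where you supply an explicit Chung--Fuchs argument (correctly flagging that the increments need not be integrable under $\ES(K_0)<\infty$ alone, and using the nearest-neighbor atom from $\Pb[\tau_1=1]>0$ to control the characteristic function near the origin), the paper simply cites Spitzer's Theorem~T1, p.~83, which is precisely the moment-free statement that any genuinely $d$-dimensional random walk is transient for $d\ge3$.
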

\begin{proof}
   Assume $\ES(K_0)<\infty$. From Proposition \ref{regeneration}, we have that, under $\Pb$, 
   $(Y_{k+1}-Y_k)_{k\ge0}$ is a sequence of i.i.d.~random variables.
   As the definition of the walk $(X_n)$ is symmetric with respect to every direction of $\mathbb{Z}^d$, 
   we have that, under $\Pb$, the process $(Y_k)_k$ is a symmetric, genuinely $d$-dimensional random walk. 
   We can then directly conclude the first statement by using Theorem T1, p.83 of Spitzer's book \cite{Spit}.
\end{proof}

\section{Transience and CLT for the Random Memory Walk}

% The following statement simply summarize Theorem \ref{mainth1}and Theorem \ref{mainth2}.\\
The proof of the CLT (Theorem~\ref{mainth2}) will come easily from classical arguments. 
On the other hand, the proof for transience (Theorem~\ref{mainth1}) 
requires some work as we want to derive it under minimal assumptions. 
The idea is that, once we know that the walk $(Y_k)=(X_{\tau_k})_k$ is transient, 
we need to prove that the random walk $(X_n)$ cannot come back to zero between two regeneration times infinitely often.

% \begin{theorem}
% Assume $\ES(K_0)<\infty$. On $\mathbb{Z}^d$, $d\ge3$, the walk $X$ is transient under $\Pb$ and thus under $\PR$.\\
% Moreover, if $\ES(K_0^3)<\infty$, then $(X_n)$ satisfies a function central limit Theorem under $\PR$.
% \end{theorem}

\begin{proof}[Proof of Theorem~\ref{mainth1}]
   We will show the transience of $(X_n)$.
   Note that $X$ is transient, i.e.~$||X_n||\to \infty$, if and only if it visits $0$ finitely often. 
   We already know that the random walk $(Y_k)_k$ visits $0$ only finitely often, 
   which is equivalent to saying that there is only a finite number of indices $i$ such that $X_{\tau_i}=0$. 
   We need to prove that $X$ cannot come back to $0$ between two regeneration times infinitely often.
   
   Let us define the sequence of successive return times  to $0$ by $R_0=0$ and $R_i=\inf\{n>R_{i-1}:X_n=0\}$, for $i\ge1$.
   In the following computation, we use the fact that, every time $X$ is back at $0$ and this time is a regeneration time, it implies that $Y$ is back at $0$, thus this cannot happen infinitely often. 
   Recall that $\mathcal{D}_n$ is the event that $n$ is a regeneration time; cf.~\eqref{eq:dn}. We have that
   \begin{align*}
      &\Pb[\cap_{i\ge1}\{R_i<\infty\}]\\
      &=\Pb[\cap_{i\ge1}\cup_{k\ge i}\{R_k<\infty, \mathcal{D}_{R_k}\}]
         +\Pb[\cup_{i\ge1}\cap_{k\ge i}\{R_k<\infty, \mathcal{D}_{R_k}^c\}]\\
      &\le \Pb[Y\text{ visits }0\text{ i.o.}]
      +\Pb[\cup_{i\ge1}\cap_{k\ge i}\{R_k<\infty, \mathcal{D}_{R_k}^c\}]\\
      &=\Pb[\cup_{i\ge1}\cap_{k\ge i}\{R_k<\infty, \mathcal{D}_{R_k}^c\}].
   \end{align*}
%    where we used that any $i>0$ such that $\mathcal{D}_i$ holds is a regeneration time. 
   Hence, we obtain the bound
   \begin{align*}
      \Pb[\cap_{i\ge1}\{R_i<\infty\}]\le \sum_{i\ge1}\Pb[\cap_{k\ge i}\{R_k<\infty, \mathcal{D}_{R_k}^c\}].
   \end{align*}
   Let us fix an index $i\ge1$ and prove that $\Pb[\cap_{k\ge i}\{R_k<\infty, \mathcal{D}_{R_k}^c\}]=0$.\\
   We need to define inductively a sequence of stopping times that are all finite on $\cap_{k\ge i}\{R_k<\infty, \mathcal{D}_{R_k}^c\}$. First, define
   \begin{align*}
      \tilde{R}_1&=R_i\ge i\ge1,\\
      \tilde{S}_1&= \tilde{R}_1+\inf\{j\ge 0: K_{\tilde{R}_1+j}>j\}.
   \end{align*}
   Note that $\tilde S_1$ is the first position after $R_1$ whose memory reaches back to before $\tilde R_1$; in other words, $\tilde S_1$ is the first position that shows that $\tilde R_1$ is not a regeneration time.
   Then, define inductively, for any $k\ge1$,
   \begin{align*}
      \tilde{R}_{k+1}&=\inf\{j>\tilde{S}_k: X_j=0\}\ge k+1,\\
      \tilde{S}_{k+1}&= \tilde{R}_{k+1}+\inf\{j\ge 0: K_{\tilde{R}_{k+1}+j}>j\}.
   \end{align*}
   The times $(\tilde{R}_n)_n$ are stopping times with respect to the filtration $\mathcal{F}_n:=\sigma\left(X_k,K_{k-1},0\le k\le n\right)$ 
   and the  times $(\tilde{S}_n)_n$ are stopping times with respect to $\sigma\left(X_k,K_{k},0\le k\le n\right)$. Moreover, we have that
   \begin{align}\nonumber
   \Pb[\cap_{k\ge i}\{R_k<\infty, \mathcal{D}_{R_k}^c\}]&\le \Pb[\cap_{k\ge 1}\{\tilde{R}_k<\infty, \tilde{S}_k<\infty\}]\\
   &=\lim_{N\to\infty} \Pb[\cap_{k= 1}^N\{\tilde{R}_k<\infty, \tilde{S}_k<\infty\}].\label{ineq2}
   \end{align}
   Now, note that, on the event $\{\tilde{R}_k<\infty\}$,
   \begin{align*}
   \Pb\left[\left. \tilde{S}_k<\infty\right|\F_{\tilde{R}_k}\right]&=1-\Pb\left[\left. \D_{\tilde{R}_k}\right|\F_{\tilde{R}_k}\right]\\
   &=1-\sum_{n\ge k}\1{\tilde{R}_k=n}\Pb\left[\left. \D_{n}\right|\F_n\right]\\
   &=1-\sum_{n\ge k}\1{\tilde{R}_k=n}\Pb\left[ \D_{n}\right]\\
   &\le 1-\PR[\D_0],
   \end{align*}
   where we used that $\D_n$ is independent of $\F_n$ and \eqref{ineq}. Together with \eqref{ineq2}, we obtain that
   \begin{align*}
   \Pb[\cap_{k\ge i}\{R_k<\infty, \mathcal{D}_{R_k}^c\}]&\le \lim_{N\to\infty}\left(1-\PR[\D_0]\right)^N=0.
   \end{align*}
   This finally implies that 
   \begin{align*}
   \Pb[X\text{ is recurrent}]=\Pb[\cap_{i\ge1}\{R_i<\infty\}]=0.
   \end{align*}
\end{proof}

\begin{proof}[Proof of Theorem~\ref{mainth2}]
   We now establish the functional central limit theorem, assuming $\ES(K_0^3)<\infty$. 
   We will simply explain why it holds, as this can be proved by following classical results, 
   for instance the proof of Theorem 4.1 in \cite{Sznit} (the only difference is that Brownian motion being non-degenerate comes much more easily in our case, as the process is fully symmetric). 
   The idea of the proof is simply that a functional CLT holds for the random walk $(Y_k)_k$ as, for each $k\ge1$, 
   $Y_k$ is a sum of i.i.d.~random variables which are centered and square integrable (using our assumptions). 
   This comes from Donsker's invariance principle. 
   From there, one only needs an inversion argument for $k\mapsto \tau_k$, 
   which comes from the fact that $\tau_k$ is also a sum of i.i.d.~random variables (satisfying a law of large numbers), 
   and for the first $n$ regeneration times, the distances between successive regeneration times are small compared to $\sqrt{n}$ (in probability).
   This latter step is guaranteed by the fact that $\tau_1$ has a finite second moment under $\Pb$.
\end{proof}

\section{Extensions} \label{extension}

There are two main ways in which our results can be extended. The first one is that the assumption $\PR(K_0=0)>0$ is not necessary. 
The second one is that the jump distribution of the random memory walk does not need to have the form
of a once-reinforced random walk, as stated in~\eqref{eq:jump}.

We start explaining how we can get over the assumption $\PR(K_0=0)>0$. 
This assumption might seem arbitrary at first, but this is actually equivalent to saying that, regardless of the past history of the random walk, 
the walker jumps to any given neighbor with a probability bounded below by a universal constant.
Note that this is indeed the case when the jump distribution is as given by~\eqref{eq:jump} since the probability that the walker jumps to any given neighbor is at least $\frac{1}{1+(2d-1)(1+\delta)}$, regardless of everything else.
So even if we had $\PR(K_0=0)=0$, we could redefine the jump distribution and the distribution of $K_0$ to have $\PR(K_0=0)>0$.

This then leads us to look at different jump distributions for the walker. Consider the following more general version of the random memory walk.
Define the filtration $\F_n=\sigma((X_k,K_k), k\le n)$, for any $n\ge0$. Assuming $X_n=x\in\mathbb{Z}^d$ and $y$ is a neighbor of $x$, i.e. $|x-y|=1$, the next step is distributed according to the following conditional probability:
\begin{equation}\label{def0}
\PR\left[\left.X_{n+1}=y\right|\F_n\right]=f\left(x,y,R_{n,K_n}\right),
\end{equation}
where $f:\mathbb{Z}^d\times \mathbb{Z}^d\times \mathcal{E}\to (0,1)$ is some predetermined function, and $\mathcal{E}$ denotes the set of all finite subsets of edges of $\mathbb{Z}^d$.

Then our proofs work provided $f$ satisfies some symmetry assumption. 
Namely, it is enough to require that $f$ is invariant under graph isomorphism. 
We also want to impose that either $\PR(K_0=0)>0$ or there exists a positive constant $c$ so that for any neighboring vertices $x$ and $y$, and any $R\in\mathcal{E}$ we have 
$$
   f(x,y,R)\ge c.
$$
% the law of $(X_n)$ does not change if we exchange any of the coordinates, 
% and moreover we need that the same property applies after each regeneration time. These conditions are a bit technical to state at this point of the paper, thus we will simply state our result while assuming that, if $X_n=x$ and $y$ is a neighbor of $x$, then 
% \begin{align}\label{def}
% \PR\left[\left.X_{n+1}=y\right|\F_n\right]=&\1{K_n=0}\times\frac{1}{2d}\\ \nonumber
% &+\sum_{i=1}^\infty\1{K_n=i}\times \frac{1}{1-p_0}\\\nonumber
% &\qquad \times\left\{ \frac{1+\delta\1{\{x,y\}\in R_{n,i}}}{\sum_{z:z\sim x} 1+\delta\1{\{x,z\}\in R_{n,i}}}-\frac{p_0}{2d}\right\},
% \end{align}
% where $\delta>0$ is some reinforcement parameter.
% 
% This definition may seem unnecessarily complicated. We give an explanation in Section \ref{discussion} of why we choose this precise definition for the definition of our model. The main advantage of this definition is that it includes the case where one would like to study a version of the ORRW where the random walk has only access to a finite memory $M>0$ by choosing:
% \[
% \mathbb{P}(K_0=0)=1-\mathbb{P}(K_0=M)=\frac{2d}{2d+(2d-1)\delta}.
% \]
% This fact is explained in more details in Section \ref{discussion}. Such a ORRW with finite memory is reminiscent of the $K$-senile random walk, see for instance \cite{Holmes}.


\begin{thebibliography}{99}
\bibitem{ACK} Angel, O., Crawford, N.~and Kozma, G.~(2014). Localization for linearly edge reinforced random walks. {\it Duke Math. J.} {\bf 163(5)}, 889--921.
%\bibitem{BP}Agresti, A.~(1975). On the Extinction Times of Varying and Random Environment Branching Processes. {\it J. of Appl. Probab.} {\bf 12(1)}, 39--46.
\bibitem{DHS}Dembo, A., Huang, R.~and Sidoravicius, V.~(2014). Walking within growing domains: recurrence versus transience. {\it Electron. J. of Probab.} {\bf 19}, 20 pp..
\bibitem{DHS2}Dembo, A., Huang, R.~and Sidoravicius, V.~(2014). Monotone interaction of walk and graph: recurrence versus transience. {\it Electron. Commun. Probab.} {\bf 19}, 12 pp..
 \bibitem{Coll} Collevecchio, A. (2006).  One the transience of processes defined on Galton-Watson trees. {\it Ann. Probab.} {\bf 34(3)}, 870--878.
 \bibitem{CKS} Collevecchio, A., Kious, D.~and Sidoravicius, V.~(2020). The Branching‐Ruin Number and the Critical Parameter of Once‐Reinforced Random Walk on Trees. {\it Communications on Pure and Applied Mathematics.} {\bf 73(1)},  210--236.
%  \bibitem{CD} Coppersmith, D.~and Diaconis, P.~(1986). { Random walks with reinforcement}. {\it Unpublished manuscript.}
 \bibitem{Davis} Davis, B. (1990). { Reinforced random walk}. {\it Probab.~Theory and Related Fields}, {\bf 84(2)}, 203--229.
 \bibitem{DST} Disertori, M., Sabot, C.~and Tarr\`es, P.~(2015). Transience of edge-reinforced random walk. {\it Comm. Math. Phys.} {\bf 339(1)}, 121--148.
% \bibitem{DSZ} Disertori, M., Spencer, T.~and Zirnbauer, M.~R.~(2010). Quasi-diffusion in a 3D supersymmetric hyperbolic sigma model. {\it Comm. Math. Phys.} {\bf 200(2)}, 435--486.
% \bibitem{DoyleSnell} Doyle, P.G. and Snell E.J. (1984). { Random walks and electrical networks}. {\it Carus Math. Monographs}, {\bf 22}, Math. Assoc. Amer., Washington, D.C.
\bibitem{DKL} Durrett, R., Kesten, H.~and Limic, V.~(2002). Once edge-reinforced random walk on a tree. {\it Probab. Theory Related Fields}. {\bf 122(4)}, 567--592.
\bibitem{holmes1} Holmes, M.P.~(2009).   The scaling limit of senile reinforced random walk. {\it Electronic Communications in Probability}, {\bf 14}, 104--115.
\bibitem{holmes2} Holmes, M.P.~and  Sakai, A.~(2007).     Senile reinforced random walks. {\it Stochastic Processes and their Applications}, {\bf 117}, 1519--1539.
\bibitem{Huang}Huang, R.~(2019). On random walk on growing graphs. {\it Ann. Inst. H. Poincar\'e Probab. Statist.} {\bf 55}, 1149--1162.
\bibitem{KSS} Kious, D., Schapira, B.~and Singh, A.~(2018). Once reinforced random walk on $\mathbb{Z}\times\Gamma$. {\it preprint}. arXiv:1807.07167.
\bibitem{KS} Kious, D.~and Sidoravicius, V.~(2018). Phase transition for the Once-reinforced random walk on $\mathbb{Z}^d$-like trees. {\it Ann. Probab.} {\bf 46(4)}, 2121--2133.
\bibitem{Kozsurvey} Kozma, G.~(2012). Reinforced random walks. {\it Preprint}. arXiv:1208.0364.
% \bibitem{LP} Lyons, R. and Peres Y. (2016). Probability on trees and networks. Cambridge University Press. To appear. Available at  \url{http://pages.iu.edu/\string~rdlyons/prbtree/prbtree.html}.
% \bibitem{MR} Merkl, F.~and Rolles, S.~W.~W.~(2009). Recurrence of the Edge-reinforced random walk on a two-dimensional graph. {\it Ann. Probab.} {\bf 37(5)}, 1679--1714.
%  \bibitem{Pemtree} Pemantle, R. (1988). Phase transition in reinforced random walk and RWRE on trees. {\it Ann. Probab.} {\bf 16}, 1229--1241.
%  \bibitem{Pemsurvey} Pemantle, R.~(2007). A survey of random processes with reinforcement. {\it Probab. Surv.} {\bf 4}, 1--79.
 \bibitem{ST} Sabot, C.~and Tarr\`es, P.~(2015). Edge-reinforced random walk, vertex-reinforced jump process and the supersymmetric hyperbolic sigma model. {\it J. Eur. Math. Soc.} {\bf 17(9)}, 2353--2378.
%    \bibitem{ST2} Sabot, C.~and Tarr\`es, P.~(2015). Edge-reinforced random walk, vertex-reinforced jump process and the supersymmetric hyperbolic sigma model.{\it To appear in Probab.~Theory and Related Fields}.
  \bibitem{SZ} Sabot, C.~and Zeng, X.~(2019). A random Schr\''odinger operator associated with the Vertex Reinforced Jump Process on infinite graphs. {\it  J. Amer. Math. Soc.} {bf 32}, 311--349.
 \bibitem{Sellke} Sellke, T.~(2006). Recurrence of reinforced random walk on a ladder. {\it Electron. J. Probab.} {\bf 11}, 301--310.
\bibitem{Spit} Spitzer, F.~(1976). Principles of Random Walk, Second Edition. {\it Springer}.
%  \bibitem{PTsurvey} Tarr\`es, P.~(2011). Localization of reinforced random walks. {\it Preprint}. arXiv:1103.5536.
  \bibitem{Vervoort} Vervoort, M. (2002). {\it Reinforced random walks}. 
\bibitem{Sznit} Sznitman, A.-S.~(2000).~Slowdown estimates and central limit theorem
for random walks in random environment. {\it J. Eur. Math. Soc.}, {\bf 2}, 93--143.
\bibitem{SznitZern} Sznitman, A.-S.~and Zerner, M.~(1999).~A law of large numbers for random walks in random
environment. {\it Ann. of Probab.}, {\bf 27 (4)}, 1851--1869.
\end{thebibliography}
\end{document}